\newtheorem{theorem}{Theorem}[section]
\newtheorem{corollary}[theorem]{Corollary}
\newtheorem{lemma}[theorem]{Lemma}
\theoremstyle{definition}
\newtheorem{definition}[theorem]{Definition}
\newtheorem{example}[theorem]{Example}
\newcommand{\N}{\mathbb{N}}
\newcommand{\R}{\mathbb{R}}
\begin{document}
	
	\title[]{Boundedness of some convolution-type operators on metric measure spaces}

    
\author{J. M. Aldaz}
\address{Instituto de Ciencias Matem\'aticas (CSIC-UAM-UC3M-UCM) and Departamento de 
Mate\-m\'aticas,
Universidad  Aut\'onoma de Madrid, Cantoblanco 28049, Madrid, Spain.}
\email{jesus.munarriz@uam.es}
\email{jesus.munarriz@icmat.es}

\thanks{2020 {\em Mathematical Subject Classification.} 41A35}
\thanks{Key words and phrases: \emph{metric measure spaces, Besicovitch covering properties, convolution.}}

\thanks{The author was partially supported by Grant PID2019-106870GB-I00 of the
MICINN of Spain, and also  by ICMAT Severo Ochoa project 
CEX2019-000904-S (MICINN)}







\begin{abstract} We explore boundedness properties in the context of metric measure spaces, of some natural operators of convolution type whose study is suggested by certain transformations used in computer vision.
\end{abstract}

	\maketitle
	
	
	\section {Introduction}

Using the class of indicator functions of balls in a metric measure space, it is possible to define a (restricted) notion of convolution as follows. Recall that for $x\in\R^d$ and a locally integrable $f:\R^d\to\R$,
$$
f*\mathbf{1}_{B(0,r)}(x) := \int_{\R^d} f(y) \mathbf{1}_{B(0,r)}(x - y) dy = \int_{B(x,r)} f(y)  dy.
$$
While general metric spaces lack a group structure, the second equality can nevertheless be used to define convolutions when the second terms belong the class of indicator functions of balls.

	\begin{definition} A Borel measure is   {\em $\tau$-additive} or {\em $\tau$-smooth}, if for every
		collection  $\{U_\alpha : \alpha \in \Lambda\}$
		of  open sets, 
		$$
		\mu (\cup_\alpha U_\alpha) = \sup_{\mathcal{F}} \mu(\cup_{i=1}^nU_{\alpha_i}),
		$$
		where the supremum is taken over all finite subcollections $\mathcal{F} = \{U_{\alpha_1}, \dots, U_{\alpha_n} \}$
		of  $\{U_\alpha : \alpha \in \Lambda\}$.
		We say that $(X, d, \mu)$ is a {\em metric measure space} if
		$\mu$ is a  $\tau$-additive  Borel measure on the metric space $(X, d)$, such that $\mu$ assigns finite measure to bounded Borel sets.
	\end{definition} 
	
For the purposes of the present paper, the reader not familiar with the notion of
$\tau$-additivity may assume without loss that
we are dealing with Borel measures on a separable metric space. A justification of this statement will be provided after Definition \ref{supp} (cf. also the last Section).

\begin{definition}\label{conv} Let  $(X, d, \mu)$ be a metric measure space and let $g$ be  a locally integrable function 
	on $X$. The
{\em metric convolutions}, or  {\em metric cross-correlations}, associated to the family of balls, are defined as follows: for each fixed  $r >0$ and each $x\in X$, set
	\begin{equation}\label{convop}
		g * \mathbf{1}_{B(x, r)} :=  \int _{B(x, r)}  g (y) \ d\mu (y).
	\end{equation}
\end{definition}
	
The motivation to study such objects comes from specific examples in the area of image processing:
 discretize the plane into pixels, and select a rectangle. Grey scale images can be defined by assigning to each
	pixel in the rectangle a value between 0 and 255 (from lesser to greater luminosity, so white corresponds to 255), and the value 0 to pixels outside the rectangle (the so called zero-padding). In computer vision (cf. for instance \cite{Sz}), an often employed step consists in fixing a certain length, say $2k + 1$, and replacing the value of each pixel with the sum of all the pixels in the square of sidelength $2k + 1$, centered at the given pixel  (and interpreting as white any value larger than 255). This operation is called ``convolution" or more precisely ``cross-correlation'' (since no reflection is involved; we will use both terms interchangeably). Note that ``convolution" is carried out only in a subset of the plane, the selected rectangle, so it does not coincide with the usual notion of convolution on $\R^2$. In this particular case cross-correlation is easily seen to define an $L^1$-bounded operator, since all pixels are given the same weight, so all squares have the same measure. However, it is natural to ask about boundedness of such operators in the general setting of metric measure spaces, as per Definition \ref{conv}. Regarding the cross-correlation $$
\int_{\R^d} f(y) \mathbf{1}_{B(0,r)}(x + y) dy = \int_{B(-x,r)} f(y)  dy
$$
on $\R^d$, there is no notion of reflection $x\mapsto -x$ in general metric spaces, so we do not distinguish between convolution and cross-correlation in Definiton \ref{conv}.

\vskip .2 cm

	There are convolutions with filters in image processing which do not quite fit into the above setting, for instance, the gaussian smoothing filter  given by the matrix
	$$
	G:= \frac{1}{16}\left(
\begin{array}{ccc}
  1 & 2 & 1 \\
  2 & 4 & 2 \\
  1 & 2 & 1 \\
\end{array}
\right).
$$
On each pixel $x$ in the selected rectangle we carry out the following operations. Take the $3 \times 3$ matrix $A$ centered at $x$, perform the Hadamard multiplication of matrices with $G$ and $A$, that is, the entry-wise product $GA$ of matrices. Then add up all the entries of $GA$ and replace $x$ with the value so obtained. Note that for the particular example where all entries of a filter $G$ are equal to 1, we are in the case generalized by Definition \ref{conv}.

\vskip .2 cm

 Next we consider this operation with arbitrary $G$, in the context of metric measure spaces.

\begin{definition}\label{convfilter} Let  $(X, d, \mu)$ be a metric measure space and let $g$ be  a locally integrable function 
	on $X$. The
{\em  metric convolutions with a filter}, or  {\em metric cross-correlations with a filter}, associated to the family of balls, are defined as follows: fix $r > 0$; for each  $x\in X$, let $\mu_{x,r}$ be a signed, $\tau$-additive measure on the ball $B(x,r)$. Then
	\begin{equation}\label{convopfilter}
		g * \mu_{x, r} :=  \int _{B(x, r)}  g (y) \ d\mu_{x, r} (y).
	\end{equation}
\end{definition}

The collection $\{\mu_{x,r}\}_{x\in X}$ is what we are calling a filter. 
\vskip .2 cm

	Boundedness for averages in metric measure spaces has been studied by the author in \cite{Al1} and \cite{Al2}. As we will see later, boundedness for metric cross-correlations $g * \mathbf{1}_{B(\cdot, r)} $ follows directly from the case for averages, which hold under a very weak Besicovitch type condition (cf. Definition \ref{ERBIP}). 	
Under an additional necessary condition on the relation between the measures in the filter and $\mu$,  we will show that 
boundedness properties of $g * \mu_{\cdot, r}$ can be derived from those
of $g * \mathbf{1}_{B(\cdot, r)} $. 

Regarding the motivating examples of rectangles in the plane, by the
Besicovitch covering theorem these hypotheses are always satisfied on $\R^d$ and
all its subsets. The interpretation of boundedness for these examples would be that the luminosity of the transformed images after applying a filter, is controlled by the luminosity of the original images.
Finally, we consider signed and not just positive measures in the preceding definition, because of the use for edge detection of filters such as the Prewitt filters, for example
$$
\left(
\begin{array}{ccc}
  - 1 & 0 & 1 \\
  -1 & 0 & 1 \\
 - 1 & 0 & 1 \\
\end{array}
\right).
$$

	\section {Additional definitions, and results} 
	
	We will utilize $B^{o}(x,r) := \{y\in X: d(x,y) < r\}$ to denote metrically open balls, 
	and 
	$B^{cl}(x,r) := \{y\in X: d(x,y) \le r\}$ to refer to metrically closed balls (``closed ball" will always be understood in the metric, not the
	topological sense). 
	If we do not want to specify whether balls are open or closed,
	we write $B(x,r)$. But when we utilize $B(x,r)$, we assume that all balls are of the same kind, i.e., all open or all closed.  

\vskip .2 cm	
		
	Recall that 
	the complement of
	the support $(\operatorname{supp}\mu)^c := \cup \{ B^{o}(x, r): x \in X, \mu B^{o}(x,r) = 0\}$
	of a Borel  measure
	$\mu$,  is an open set, and hence measurable. 
	
	\begin{definition}\label{supp} Let $(X, d)$ be a metric space and let
		$\mu$ be a locally finite Borel measure on $X$. 
		If $\mu (X \setminus \operatorname{supp}\mu) = 0$, 
		we say that $\mu$ has {\em full support}. 
	\end{definition}
	
	By $\tau$-additivity,  if $(X, d, \mu)$ is  a metric measure space, then 
	$\mu$ has full support,
	since $X \setminus \operatorname{supp}\mu $ is a union of open balls of measure zero.
	Actually, the other implication also holds, for the support of a $\tau$-additive locally bounded measure is always separable,  so  having full support is equivalent to
	$\tau$-additivity (cf. \cite[Proposition 7. 2. 10]{Bo} for more details). 	
Thus, if we are working with {\em just one measure}, we can suppose, by disregarding the set of measure zero $X \setminus \operatorname{supp}\mu $, that $X$ is separable and that all balls have positive measure. Furthermore, when we consider (possibly uncountable) collections of measures, as in Definition \ref{convfilter},  
the ``adapted'' condition from Definition \ref{adapted} ensures that the supports of all these measures are contained in $\operatorname{supp}\mu $. 
	
		\begin{definition}\label{aver} Let  $(X, d, \mu)$ be a metric measure space and let $g$ be  a locally integrable function 
	on $X$. The
	averaging operators $A_{r, \mu}$ acting on $g$ are defined as follows: for each  $ r > 0$ and each $x\in \operatorname{supp}\mu$, set
	\begin{equation}\label{avop}
		A_{r , \mu} g(x) := \frac{1}{\mu
			(B(x, r))} \int _{B(x, r)}  g \ d\mu.
	\end{equation}
\end{definition}
	
	Averaging operators in metric measure spaces are defined almost everywhere,   by
	$\tau$-additivity. 
	
	\vskip .2 cm
		
The next definitions come from \cite{Al2}. Recall that in general metric spaces centers and radii of balls are not unique,
that is, a ball (a set of points) may admit different descriptions or names.

\begin{definition}  \label{cover} 
	Let  $(X, d)$ be a   metric space, and let  $\mathcal{C}$ be a collection of balls. We say that   $\mathcal{C}$ is {\em uniformly bounded}  if there exists an $R > 0$ such that given any ball
	$B \in \mathcal{C}$, we can find a center $x$ and a radius $r$ with $B = B(x,r)$ and $r \le R$.
		
	Additionally,
	 we say   $\mathcal{C}$ is a {\em centered cover} of $A$ if  for every $x\in A$,  there exist a ball
	$B \in \mathcal{C}$ such that $x$ is a center of $B$.
\end{definition} 

As an illustration of Definition   \ref{cover}, take $X = (0,1)$ with the standard metric. The collection $\{(0,1)\}$ having only one ball  is a centered uniformly bounded cover of $(0,1)$. To see why, note that  we can select all radii $r = 1$, and then for all $x \in (0,1)$ we have  $B(x,1) = (0,1)$. It is also possible to select radii that are not uniformly bounded, for example, $\{(0,1)\} = \{B(x, 1/x) : x \in (0,1)\}$, but from the point of view of Definition   \ref{cover} this is irrelevant.

\begin{definition}  \label{ERBIP} A  collection $\mathcal{C}$ of balls in a metric space $(X, d)$ is a {\em Besicovitch family} if for each  $B \in \mathcal{C}$ it is possible to choose a center and a radius,
 such that whenever $B_1, B_2 \in \mathcal{C}$ are distinct balls, neither ball contains the selected center of the other.
 
Denote by
 	$\mathcal{E}(X,d)$ the collection of all Besicovitch families $\mathcal{C}$
 	of $(X, d)$ with the property that all balls in
 	$\mathcal{C}$ have equal radius.  That is, for each $B \in \mathcal{C}$ there is a choice of center
	and radius such that with that choice, $\mathcal{C}$ is a Besicovitch family and furthermore, all selected radii are equal. The 
 	{\em equal radius Besicovitch constant}  of $(X, d)$ is
 	\begin{equation}\label{BC}
 	E (X, d) := \sup \left\{	\sum_{B(x,  r) \in \mathcal{C}} \mathbf{1}_{B(x,  r)} (y): y \in X, \ \mathcal{C} \in \mathcal{E}(X,d) \right\}.
 	\end{equation}
 	We say that $(X, d)$ has the  {\em equal radius Besicovitch Intersection Property} with constant $E (X, d)$  if $E (X, d) < \infty$.
\end{definition} 
For example, if we consider the plane with the $\ell_\infty$ norm, so balls are just squares with sides parallel to the axes, it is known that $E (\R^2, \|\cdot\|_\infty) = 4$, cf.  \cite[Proposition 23]{Sw}.

\vskip .2 cm

 The preceding definition does not specify whether balls are open or closed, but there is no need to: it is shown in Proposition 4.6 of \cite{Al2} that
 a metric space $(X, d)$ has the equal radius  Besicovitch
 	intersection property with constant $E$  for collections of open balls, if and only if it
 	has   the equal radius  Besicovitch
 	intersection property for collections of closed  balls, with the same constant.

 Regarding the next result, it is part of \cite[Theorem 4.7]{Al2} plus a standard application of Jensen's inequality. 
 
 \begin{theorem}\label{constantE} Let $(X, d)$ be a  
 	metric  space.  If the space $(X, d)$ has the  equal radius Besicovitch intersection property with constant $E$, then
 for every $r > 0$ and every $\tau$-additive, locally finite  Borel measure $\mu$ on $X$, whenever $1\le p < \infty$ we have $\|A_{r, \mu}\|_{L^p  \to L^p} \le 
 	E^{1/p}$, and $\|A_{r, \mu}\|_{L^\infty  \to L^\infty} \le 1$.
 \end{theorem}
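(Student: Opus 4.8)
The plan is to reduce everything to the two endpoints $p=1$ and $p=\infty$, and to fill in the intermediate range by Jensen's inequality. The key structural observation is that for $x\in\operatorname{supp}\mu$ the value $A_{r,\mu}g(x)$ is the integral of $g$ against the \emph{probability} measure $\nu_{x,r}:=\mu(B(x,r))^{-1}\,\mathbf 1_{B(x,r)}\,\mu$. This averaging structure is exactly what makes the $L^\infty$ endpoint immediate and what lets Jensen bridge the gap to the intermediate exponents, while the genuinely substantive ingredient is the $L^1\to L^1$ estimate, which is where the equal radius Besicovitch property enters; I would import the latter from \cite[Theorem 4.7]{Al2}.

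For the $L^\infty$ endpoint, since $\nu_{x,r}$ is a probability measure we get $|A_{r,\mu}g(x)|\le\int|g|\,d\nu_{x,r}\le\|g\|_\infty$ for $\mu$-a.e.\ $x$, whence $\|A_{r,\mu}\|_{L^\infty\to L^\infty}\le 1$ (with equality, as $g\equiv 1$ shows). For the $L^1$ endpoint I would exploit the positivity of the kernel: writing $A_{r,\mu}g(x)=\int K(x,y)g(y)\,d\mu(y)$ with $K(x,y)=\mu(B(x,r))^{-1}\mathbf 1_{B(x,r)}(y)\ge 0$, the estimate $|A_{r,\mu}g|\le A_{r,\mu}|g|$ and Fubini's theorem reduce the $L^1\to L^1$ norm to the essential supremum over $y$ of
\[
\int_X K(x,y)\,d\mu(x)=\int_{B(y,r)}\frac{d\mu(x)}{\mu(B(x,r))},
\]
where I have used that $y\in B(x,r)\iff x\in B(y,r)$ by symmetry of $d$. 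That this overlap integral is bounded by $E$ for $\mu$-a.e.\ $y$ is precisely the content extracted from the equal radius Besicovitch intersection property in \cite[Theorem 4.7]{Al2}, giving $\|A_{r,\mu}\|_{L^1\to L^1}\le E$.

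Finally, for $1<p<\infty$ I would apply Jensen's inequality with the convex function $t\mapsto|t|^p$ and the probability measure $\nu_{x,r}$, obtaining the pointwise bound $|A_{r,\mu}g(x)|^p\le A_{r,\mu}(|g|^p)(x)$ for $\mu$-a.e.\ $x$. Integrating in $x$ and applying the $L^1$ estimate to the nonnegative function $|g|^p\in L^1$ yields
\[
\|A_{r,\mu}g\|_p^p\;\le\;\int_X A_{r,\mu}(|g|^p)\,d\mu\;=\;\bigl\|A_{r,\mu}(|g|^p)\bigr\|_1\;\le\;E\,\bigl\||g|^p\bigr\|_1\;=\;E\,\|g\|_p^p,
\]
and taking $p$-th roots gives $\|A_{r,\mu}\|_{L^p\to L^p}\le E^{1/p}$. (Riesz--Thorin interpolation between the two endpoints would produce the same constant $E^{1/p}\cdot 1^{1-1/p}$, but Jensen avoids complexifying and works directly for this positive operator.) The main obstacle is the $L^1$ bound: the $L^\infty$ estimate and the Jensen step are routine, whereas controlling the overlap integral above by $E$ is exactly the delicate Besicovitch-type geometry, which is why I would lean on \cite[Theorem 4.7]{Al2} rather than redo it here.
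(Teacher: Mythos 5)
Your proposal is correct and follows essentially the same route as the paper: the $L^1\to L^1$ bound with constant $E$ is imported from \cite[Theorem 4.7]{Al2}, the intermediate exponents $1<p<\infty$ are handled by Jensen's inequality applied to the probability measures $\mu(B(x,r))^{-1}\mathbf{1}_{B(x,r)}\,d\mu$, and the $L^\infty$ bound is observed to be trivial. Your additional Fubini/overlap-integral sketch of why the $L^1$ bound holds is a reasonable gloss but is not logically needed, since you (like the paper) ultimately defer that endpoint to the cited reference.
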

 
 \begin{proof} It follows from \cite[Theorem 4.7]{Al1} that $\|A_{r, \mu}\|_{L^1  \to L^1} \le 
 	E$   if $(X, d)$  has the  equal radius Besicovitch intersection property with constant $E$. Let $1 < p < \infty$ and let $f \in L^p$.
	Then $|f|^p \in L^1$, so by Jensen's inequality, 
	$$
	\|A_{r, \mu} f \|_{p}^p \le \|A_{r, \mu} |f|^p \|_{1} \le E \| |f|^p \|_{1} = E \| f \|_{p}^p.
	$$
	Of course, $
	\|A_{r, \mu} f \|_{\infty} \le  \| f \|_{\infty}
	$ is trivial.		
	\end{proof}
	
\begin{example}\label{bigballs} The metric convolution cannot be bounded independently of the measure of balls of fixed radius $r >0$.  
		
Let $(X, d)$ be $X := \{0, 1/2\}\times \N  \subset \R^2$ with the inherited distance. Let $\mu$ be the counting measure on $\{1/2\}\times \N$,  and on $X := \{0\}\times \N$, set $\mu \{(0,n)\}  = n$. 
	 Writing $f_n := \mathbf{1}_{\{(1/2,n)\}}$,
 for all $p\in [1, \infty]$ we have $\|f_n\|_{p} = 1$.
	 For convenience we shall use closed balls, with
	 $r = 1/2$, so when $n>0$, we have
 $f_n * \mathbf{1}_{B^{cl}((0,n), 1/2)}   = \int_{B^{cl}(\{(0,n\}, 1/2)} \mathbf{1}_{\{(1/2,n)\}} (y)\ d\mu(y) = 1$. 
 Thus, for $1 \le p < \infty$, 
 $$
 \|f_n * \mathbf{1}_{B^{cl}(\cdot, 1/2)}  \|_{p} 
 \ge
 n^{1/p}.
 $$
 For $p = \infty$, set $g_n := \mathbf{1}_{\{(0,n)\}}$. Then  $g_n * \mathbf{1}_{B^{cl}((0,n), 1/2)}   = \int_{B^{cl}(\{(0,n\}, 1/2)} \mathbf{1}_{\{(0,n)\}} (y)\ d\mu(y) = n$. 
\end{example}

If the measures of all balls of a fixed radius $r > 0$ are uniformly bounded above, then the metric convolution can be pointwise bounded by a suitable multiple of the corresponding averaging operator, so the bounds satisfied by the latter are inherited by the former. 

		\begin{lemma}\label{lemmapointwise} Let  $(X, d, \mu)$ be a metric measure space, let $R > 0$, and let $g$ be  a locally integrable function 
	on $X$.  Then for all $x\in \operatorname{supp}\mu$ we have  
	\begin{equation}\label{pointwiseconvop}
	|g| * \mathbf{1}_{B( x, R)} 
		\le  \sup_{y\in X} \mu B(y,R) A_{R , \mu} |g|(x).
	\end{equation}
	\end{lemma}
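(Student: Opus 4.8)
The plan is to exploit the fact that the metric convolution $|g| * \mathbf{1}_{B(x,R)}$ and the averaging operator $A_{R,\mu}|g|(x)$ differ only by the normalization factor $\mu(B(x,R))$, and then to bound that factor by its supremum over all centers.

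First I would record, straight from Definitions \ref{conv} and \ref{aver}, the identity
$$
|g| * \mathbf{1}_{B(x,R)} = \int_{B(x,R)} |g| \ d\mu = \mu(B(x,R)) \, A_{R,\mu}|g|(x),
$$
valid for every $x \in \operatorname{supp}\mu$. The middle expression is just the definition of the metric convolution applied to $|g|$, and the last equality multiplies and divides by $\mu(B(x,R))$. For this to be meaningful I would point out that the hypothesis $x \in \operatorname{supp}\mu$ guarantees $\mu(B(x,R)) > 0$: indeed, by the description of $(\operatorname{supp}\mu)^c$ recalled before Definition \ref{supp}, the complement of the support is a union of open balls of measure zero, so no ball centered at a point of $\operatorname{supp}\mu$ can have measure zero. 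Hence $A_{R,\mu}|g|(x)$ is well-defined, and it is nonnegative because $|g| \ge 0$.

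It then remains only to bound the factor $\mu(B(x,R))$. Trivially $\mu(B(x,R)) \le \sup_{y\in X} \mu B(y,R)$, obtained by taking $y = x$ in the supremum. Multiplying this inequality by the nonnegative quantity $A_{R,\mu}|g|(x)$ and combining with the identity above gives
$$
|g| * \mathbf{1}_{B(x,R)} = \mu(B(x,R)) \, A_{R,\mu}|g|(x) \le \sup_{y\in X} \mu B(y,R) \, A_{R,\mu}|g|(x),
$$
which is exactly \eqref{pointwiseconvop}. I expect no real obstacle in this argument: its entire content is the algebraic relation between convolution and average through the weight $\mu(B(x,R))$, together with the obvious fact that this weight does not exceed its own supremum. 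The single point deserving care is the positivity of $\mu(B(x,R))$, which is precisely what restricting $x$ to $\operatorname{supp}\mu$ secures.
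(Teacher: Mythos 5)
Your proposal is correct and follows essentially the same route as the paper: both rest on the identity $|g| * \mathbf{1}_{B(x,R)} = \mu(B(x,R))\, A_{R,\mu}|g|(x)$ together with the trivial bound $\mu(B(x,R)) \le \sup_{y\in X}\mu B(y,R)$, the only cosmetic difference being that the paper multiplies and divides by the supremum and then shrinks the denominator to $\mu(B(x,R))$, whereas you normalize by $\mu(B(x,R))$ first. Your explicit justification that $\mu(B(x,R))>0$ for $x\in\operatorname{supp}\mu$ is a welcome detail the paper leaves implicit.
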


	\begin{proof} Suppose that for $R >0$ we have $\sup_{x\in X} \mu B(x,R) < \infty$; otherwise there is  nothing to show. For $x\in \operatorname{supp}\mu$,
	\begin{equation*}
		|g| * \mathbf{1}_{B(x, R)}
		=
 \frac{\sup_{y\in X} \mu B(y,R) }{\sup_{y\in X} \mu B(y,R) } \int _{B(x, R)}  |g| \ d\mu
 \end{equation*}
 \begin{equation*}
 \le 
  \frac{\sup_{y\in X} \mu B(y,R) }{\mu
			(B(x, R))} \int _{B(x, r)}  |g| \ d\mu
=  \sup_{y\in X} \mu B(y,R)  A_{R , \mu} |g|(x).
	\end{equation*}
		\end{proof}
	
\begin{corollary}\label{constantEconv} Let $(X, d, \mu)$ be a  
 	metric  measure space, let $g$ be locally integrable, and let $R > 0$.  If  $(X, d)$ has the  equal radius Besicovitch intersection property with constant $E$, then for $1\le p < \infty$ we have  
	\begin{equation}\label{opnormconvop}
		\|g * \mathbf{1}_{B(\cdot, R)}\|_{L^p (\mu)} 
		\le  
		\sup_{x\in X} \mu B(x,R) E^{1/p}  \|g\|_{L^p (\mu)},	
		\end{equation}
		and $\|g * \mathbf{1}_{B(\cdot, R)}\|_{L^\infty} \le \sup_{x\in X} \mu B(x,R)   \|g\|_{L^\infty (\mu)}$.
		 \end{corollary}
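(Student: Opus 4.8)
The plan is to combine the pointwise estimate of Lemma \ref{lemmapointwise} with the operator norm bounds for the averaging operators furnished by Theorem \ref{constantE}. First I would dispose of the trivial case: if $\sup_{x\in X}\mu B(x,R) = \infty$ there is nothing to prove, so I assume this supremum is finite and abbreviate $M := \sup_{x\in X}\mu B(x,R)$.

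Next I would observe that the convolution is dominated by the convolution of the absolute value. Since $g * \mathbf{1}_{B(x,R)} = \int_{B(x,R)} g\, d\mu$, the triangle inequality for integrals gives $|g * \mathbf{1}_{B(x,R)}| \le \int_{B(x,R)} |g|\, d\mu = |g| * \mathbf{1}_{B(x,R)}$ for each $x$. Lemma \ref{lemmapointwise} then yields, for every $x \in \operatorname{supp}\mu$, the pointwise bound $|g * \mathbf{1}_{B(x,R)}| \le M\, A_{R,\mu}|g|(x)$.

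For the case $1 \le p < \infty$, I would take $L^p(\mu)$ norms of this pointwise inequality. Because $\mu$ has full support, the norm is unaffected by the $\mu$-null set $X \setminus \operatorname{supp}\mu$, so I obtain $\|g * \mathbf{1}_{B(\cdot,R)}\|_p \le M\,\|A_{R,\mu}|g|\|_p$. Applying Theorem \ref{constantE}, which gives $\|A_{R,\mu}\|_{L^p \to L^p} \le E^{1/p}$, together with $\| |g| \|_p = \|g\|_p$, produces the desired inequality $\|g * \mathbf{1}_{B(\cdot,R)}\|_p \le M\, E^{1/p}\,\|g\|_p$. The case $p = \infty$ is handled identically, using instead the bound $\|A_{R,\mu}\|_{L^\infty \to L^\infty} \le 1$ from the same theorem.

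There is no serious obstacle here: the corollary is essentially a formal consequence of the two cited results. The only points requiring a little care are the reduction to $|g|$ via the triangle inequality for integrals, and the passage from the pointwise estimate, valid on $\operatorname{supp}\mu$, to the $L^p$ estimate; the latter is legitimate precisely because full support means that $X \setminus \operatorname{supp}\mu$ is $\mu$-null and hence invisible to the $L^p(\mu)$ norm.
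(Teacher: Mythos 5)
Your proof is correct and takes essentially the same approach as the paper: the pointwise domination from Lemma \ref{lemmapointwise} combined with the bounds of Theorem \ref{constantE}. The only cosmetic difference is that you invoke the theorem's $L^p\to L^p$ operator norm bound directly, whereas the paper re-applies Jensen's inequality and integrates against the $L^1$ bound --- these amount to the same computation, since the theorem's $L^p$ bound was itself derived from its $L^1$ bound via Jensen.
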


 \begin{proof} From Lemma \ref{lemmapointwise} and Jensen's inequality we get
 \begin{equation}
		 |g * \mathbf{1}_{B(x, R)}|^p
		\le  \left(\sup_{y\in X} \mu B(y,R) \right)^p A_{R , \mu} |g|^p (x).
	\end{equation}
	Then we integrate and apply Theorem \ref{constantE}. The case $p = \infty$ is trivial.
	\end{proof}

\begin{example}\label{tinyballs} In order to be able to obtain boundedness results for convolution with a filter, there has to be some relatioship between the filter $\{\mu_{x,r}\}_{x\in X}$  and the measure $\mu$. 

For convenience we shall use closed balls, with $r = 1$. 	
Let $(X, d)$ be $X := \{0, 1\}  \subset \R$ with the inherited distance, so $B^{cl}(x,1) = X$. Let $\mu := \delta_0$ be the point mass at 0, and for $x \in \{0,1\}$  let $\mu_{x,1} := \delta_0 + \delta_1$. Then $\mathbf{1}_{\{1\}}$ has $L^1(\mu)$-norm equal to zero, but $ \mathbf{1}_{\{1\}} * \mu_{0,1} = \int_{X} \mathbf{1}_{\{1\}} (y)\ d\mu_{0,r}(y) = 1$, so
$\|\mathbf{1}_{\{1\}} * \mu_{\cdot,1}\|_{L^1(\mu)} = 1$.
\end{example}

 Given a signed measure $\nu$, we denote by $|\nu|$ its total variation, that is, the sum of its positive and negative parts.

\begin{definition}\label{adapted} Let $(X, d, \mu)$ be a  
 	metric  measure space.   We say that a filter $\{\mu_{x,r}\}_{x\in X}$ is {\em adapted} to $\mu$ if there exists a constant $M < \infty$ such that for every $x, y \in X$ and every $0 < s \le r$, we have
$$
\frac{ |\mu_{x,r}| (B(y,s) \cap B(x,r))}{\mu (B(y,s) \cap B(x,r))}
\le M.
$$
 \end{definition}

\begin{lemma}\label{meastheoreticlemma} Let  $(X, d, \mu)$ be a metric measure space such that $(X, d)$ has the   equal radius Besicovitch intersection Property with constant $E  < \infty$, and let  $\{\mu_{x,r}\}_{x\in X}$ be
 a filter  adapted to $\mu$, with constant $M < \infty$. Fix $x\in X$ and $\varepsilon >0$. Then for every relatively closed set  $F\subset B(x,r)$ we have $|\mu_{x,r}| (F) < E M \mu (F) + \varepsilon$.
	\end{lemma}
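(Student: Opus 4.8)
The plan is to reduce the estimate to the adapted bound applied to a well-chosen countable family of equal-radius balls covering $F$, whose controlled overlap is supplied by the equal radius Besicovitch intersection property, and then to let the common radius tend to $0$, using continuity from above of $\mu$ to replace the measure of the covering neighbourhood by $\mu(F)$. Before starting I would record one reduction: since the adapted hypothesis forces $\operatorname{supp}\mu_{x,r}\subset\operatorname{supp}\mu$ (as noted after Definition \ref{supp}), replacing $F$ by $F\cap\operatorname{supp}\mu$ leaves $|\mu_{x,r}|(F)$ unchanged, preserves relative closedness, and can only decrease $\mu(F)$; so it suffices to treat $F\subset\operatorname{supp}\mu$. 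I read the adapted condition of Definition \ref{adapted} as the inequality $|\mu_{x,r}|(A)\le M\,\mu(A)$ on the relevant intersections $A$, and this reduction makes the corresponding denominators strictly positive. Throughout I work with balls of the kind appearing in the statement, recalling that by \cite[Proposition 4.6]{Al2} the constant $E$ is the same for open and for closed balls.

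The core step is the construction of the cover. Fix $0<s\le r$ and choose, by Zorn's lemma, a maximal $s$-separated subset $\{y_i\}_{i\in I}$ of $F$ (so the $y_i$ lie at pairwise distance $\ge s$). For balls of equal radius, the requirement that neither ball contain the centre of the other is exactly $s$-separation of the centres, so $\{B(y_i,s)\}_{i\in I}$ is a Besicovitch family with all radii equal to $s$; by maximality it covers $F$, and by separability of $\operatorname{supp}\mu$ the index set $I$ is countable. The equal radius Besicovitch intersection property then yields the pointwise estimate $\sum_{i\in I}\mathbf{1}_{B(y_i,s)}\le E$. Applying the adapted inequality to each set $B(y_i,s)\cap B(x,r)$, using countable subadditivity of the positive measure $|\mu_{x,r}|$, and then integrating the overlap bound against $\mu$, I obtain
\begin{equation*}
|\mu_{x,r}|(F)\le\sum_{i\in I}|\mu_{x,r}|\bigl(B(y_i,s)\cap B(x,r)\bigr)\le M\sum_{i\in I}\mu\bigl(B(y_i,s)\cap B(x,r)\bigr)\le EM\,\mu\Bigl(B(x,r)\cap\bigcup_{i\in I}B(y_i,s)\Bigr).
\end{equation*}

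Finally I would let $s\downarrow0$. Since every centre $y_i$ lies in $F$, the union $\bigcup_{i\in I}B(y_i,s)$ is contained in the closed $s$-neighbourhood $F_s:=\{z\in X:d(z,F)\le s\}$, and the sets $B(x,r)\cap F_s$ decrease, as $s\downarrow0$, to $B(x,r)\cap\overline{F}=F$; the last equality is precisely where relative closedness of $F$ in $B(x,r)$ enters. Because $B(x,r)$ is bounded we have $\mu(B(x,r))<\infty$, so continuity from above gives $\mu(B(x,r)\cap F_s)\to\mu(F)$, and hence for all sufficiently small $s$ we have $\mu(B(x,r)\cap F_s)<\mu(F)+\varepsilon/(EM)$. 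Feeding this into the displayed chain yields $|\mu_{x,r}|(F)<EM\,\mu(F)+\varepsilon$.

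I expect the main obstacle to lie in the core step: converting the equal radius Besicovitch intersection property, which is an abstract statement about overlaps of Besicovitch families, into a concrete bounded-overlap cover of $F$, and checking that the maximal $s$-separated family simultaneously covers $F$, qualifies as a Besicovitch family in the sense of Definition \ref{ERBIP} (here the identification of $s$-separation with the ``no centre contained'' condition, valid only because the radii are equal, is the crux), and remains countable. By contrast the final limit is a routine continuity-from-above argument, in which relative closedness of $F$ is exactly the hypothesis that makes the limiting measure equal $\mu(F)$ rather than $\mu(\overline{F}\cap B(x,r))$.
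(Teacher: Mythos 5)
Your proof is correct and follows essentially the same route as the paper's: a Zorn's-lemma-maximal family of equal-radius balls centered in $F$ (your maximal $s$-separated set is, for equal radii, exactly the paper's maximal Besicovitch subfamily), the adapted bound applied ball-by-ball, the overlap constant $E$ to pass to the measure of a small neighbourhood of $F$, and relative closedness of $F$ to shrink that neighbourhood's measure to $\mu(F)+\varepsilon/(EM)$. Your explicit reduction to $F\subset\operatorname{supp}\mu$ (to secure countability of the cover) and the continuity-from-above phrasing at the end are minor technical refinements of, not departures from, the paper's argument.
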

	
 \begin{proof}  Fix $x\in X$ and $\varepsilon >0$, and let   $F\subset B(x,r)$ be a nonempty relatively closed  set. For notational convenience here we take $B(x,r)$ as the
 whole space, so open means relatively open in $B(x,r)$, and balls $B(y,s)$ mean $B(y,s) \cap B(x,r)$. 
 Let $O_n := \{u \in B(x,r): d(u, F) < 1/n\}$. Since $F = \cap_n O_n$, there is an $N\gg 1$ such that
  $\mu ( O_N ) < \mu (F) + \varepsilon /(EM)$.  
  
  Given the collection  
$\mathcal{C} := \{B(y,1/N): y\in  F\}$,
 there is a Besicovitch family $\mathcal{C}^\prime \subset \mathcal{C}$  
   which still covers $F$. The proof of this claim follows from a standard Zorn Lemma argument: consider the class  $\mathcal{P}$  of all subcollections of $\mathcal{C}$  that are Besicovitch families. Then $\mathcal{P}$  is nonempty, since every subcollection with just one ball is a Besicovitch family. Partially order $\mathcal{P}$   by inclusion, that is, by setting $\mathcal{C}_\alpha \le \mathcal{C}_\beta$ if and only if $\mathcal{C}_\alpha \subset \mathcal{C}_\beta$. Let $\Lambda$ be some index set and let $\{\mathcal{C}_\alpha\}_{\alpha \in \Lambda}$ be a chain in 
 $\mathcal{P}$. Suppose $B(u, 1/N), B(v, 1/N) \in \cup_{\alpha \in \Lambda} \mathcal{C}_\alpha$. Since $\{\mathcal{C}_\alpha\}_{\alpha \in \Lambda}$ is a chain, there is a $\mathcal{C}_\gamma \in \{\mathcal{C}_\alpha\}_{\alpha \in \Lambda}$ such that $B(u, 1/N), B(v, 1/N) \in \mathcal{C}_\gamma$, so neither $v \in B(u, 1/N)$ nor $u \in B(v, 1/N)$. Hence 
 $\cup_{\alpha \in \Lambda} \mathcal{C}_\alpha$ is a Besicovitch family, from which it follows that every chain in 
 $\mathcal{P}$ has an upper bound. By Zorn's lemma there is a maximal element $\mathcal{C}_m$ in $\mathcal{P}$.
 To see that $\mathcal{C}^\prime := \mathcal{C}_m$ covers $F$, note that if $y \in F \setminus \cup\mathcal{C}_m$, then $\mathcal{C}_m \cup \{B(y, 1/N)\}$ is also a Besicovitch family, contradicting the maximality of $\mathcal{C}_m$.

 Now
$$
\mathbf{1}_F
\le
 \mathbf{1}_{\cup_{B \in \mathcal{C}^\prime} B}
 \le	\sum_{B \in \mathcal{C}^\prime} \mathbf{1}_{B} 
\le E.
$$ 
If $
\sum_{B \in\mathcal{C}^\prime} \mathbf{1}_{B} (u)
> 0,
$ 
then $ u\in  \cup_{B \in \mathcal{C}^\prime} B$, so in fact 
$\sum_{B \in \mathcal{C}^\prime} \mathbf{1}_{B} 
\le
E \mathbf{1}_{\cup_{B \in \mathcal{C}^\prime} B}.$
Hence 
$$
|\mu_{x,r}| (F)
 \le	\int \left(\sum_{B \in\mathcal{C}^\prime}  \mathbf{1}_{B} \right)d |\mu_{x,r}| 
=
 \sum_{B \in \mathcal{C}^\prime} \int \mathbf{1}_{B} \ d |\mu_{x,r}|
 \le
 M \sum_{B \in \mathcal{C}^\prime} \int \mathbf{1}_{B} \ d \mu
 $$
 $$
 =
 M \int\left(\sum_{B \in \mathcal{C}^\prime}  \mathbf{1}_{B} \right)d \mu
 \le
 M \int E \mathbf{1}_{\cup_{B \in \mathcal{C}^\prime} B} \ d \mu
 \le
 EM \mu (O_N)
 \le
  EM \mu (F) + \varepsilon.
$$
	\end{proof}

 \begin{theorem}\label{constantEbis} Fix $r > 0$. Let $(X, d, \mu)$ be a  
 	metric  measure space such that $(X, d)$ has the  equal radius  Besicovitch intersection Property with constant $E  < \infty$, and let   $\{\mu_{x,r}\}_{x\in X}$ be a filter adapted to $\mu$, with constant $M < \infty$. Then for $1\le p < \infty$ and $g$ locally integrable we have  
	\begin{equation}\label{opnormconvop}
		\|g * \mu_{\cdot,r}\|_{L^p (\mu)} 
		\le  
		M \sup_{x\in X} \mu B(x,r) E^{1/p}  \|g\|_{L^p (\mu)},
		\end{equation}
		and $\|g * \mu_{\cdot, r}\|_{L^\infty} \le M \sup_{x\in X} \mu B(x,r)   \|g\|_{L^\infty (\mu)}$. \end{theorem}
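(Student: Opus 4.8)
The plan is to separate the two sources of the constant. The normalization mass $|\mu_{x,r}|(B(x,r))$ together with the factor $M$ will be supplied by the adapted condition applied with $y=x$ and $s=r$, which costs no power of $E$; the factor $E$ will enter only once, through Lemma \ref{meastheoreticlemma}, and only on the part of the estimate carrying $|g|^p$. As in the proof of Lemma \ref{lemmapointwise}, we may assume $\sup_{y\in X}\mu B(y,r)<\infty$ and $\|g\|_{L^p(\mu)}<\infty$, since otherwise the asserted inequality is trivial. Note also that Lemma \ref{meastheoreticlemma} forces $|\mu_{x,r}|\ll\mu$, so $\mu$-almost everywhere statements are $|\mu_{x,r}|$-almost everywhere statements as well.

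For the endpoint $p=\infty$ the filter condition alone suffices. Since $\mu_{x,r}$ is concentrated on $B(x,r)$,
\[
|g*\mu_{x,r}| \le \int_{B(x,r)}|g|\,d|\mu_{x,r}| \le \|g\|_{L^\infty(\mu)}\,|\mu_{x,r}|(B(x,r)),
\]
and Definition \ref{adapted} with $y=x$ and $s=r$ gives $|\mu_{x,r}|(B(x,r))\le M\mu(B(x,r))\le M\sup_{y\in X}\mu B(y,r)$, which is exactly the claimed $L^\infty$ bound.

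For $1\le p<\infty$ I would first fix $x$ and apply Jensen's inequality to the normalization of $|\mu_{x,r}|$ on $B(x,r)$ (the case $|\mu_{x,r}|(B(x,r))=0$ being trivial), obtaining
\[
|g*\mu_{x,r}|^p \le \left(\int_{B(x,r)}|g|\,d|\mu_{x,r}|\right)^{\!p} \le |\mu_{x,r}|(B(x,r))^{p-1}\int_{B(x,r)}|g|^p\,d|\mu_{x,r}|.
\]
Bounding the first factor by $\big(M\sup_{y\in X}\mu B(y,r)\big)^{p-1}$ through the adapted condition exactly as above, and then integrating in $x$ against $\mu$, reduces the theorem to the double-integral estimate with the single factor $E$. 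To prove this, letting $\varepsilon\to 0$ in Lemma \ref{meastheoreticlemma} gives $|\mu_{x,r}|(F)\le EM\mu(F)$ for every relatively closed $F\subset B(x,r)$; since finite Borel measures on a metric space are inner regular by closed sets, this upgrades to $|\mu_{x,r}|(A)\le EM\mu(A)$ for every Borel $A\subset B(x,r)$, hence $\int h\,d|\mu_{x,r}|\le EM\int h\,d\mu$ for every nonnegative Borel $h$. Applying this with $h=|g|^p$ and interchanging the order of integration by Tonelli, using the symmetry $\mathbf 1_{B(x,r)}(y)=\mathbf 1_{B(y,r)}(x)$, yields
\[
\int_X\int_{B(x,r)}|g(y)|^p\,d|\mu_{x,r}|(y)\,d\mu(x) \le EM\int_X|g(y)|^p\,\mu(B(y,r))\,d\mu(y) \le EM\sup_{y\in X}\mu B(y,r)\,\|g\|_{L^p(\mu)}^p.
\]
Collecting the factors gives $\|g*\mu_{\cdot,r}\|_{L^p(\mu)}^p\le E\big(M\sup_{y\in X}\mu B(y,r)\big)^p\|g\|_{L^p(\mu)}^p$, and taking $p$-th roots proves the claim.

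The step I expect to be most delicate is this double integral, and in particular the discipline of using $E$ exactly once. If one instead used the crude bound $|\mu_{x,r}|\le EM\mu$ for both the normalization factor and the $|g|^p$ factor, the constant would degrade to $E^{1+1/p}$; the sharp exponent $1/p$ is obtained only by estimating $|\mu_{x,r}|(B(x,r))$ through the adapted condition directly (no $E$) and reserving Lemma \ref{meastheoreticlemma} for the inner integral. The remaining technical care lies in justifying the Tonelli interchange --- namely the joint Borel measurability of $(x,y)\mapsto\mathbf 1_{B(x,r)}(y)|g(y)|^p$, which holds because $\{(x,y):y\in B(x,r)\}$ is closed (respectively open) in $X\times X$ --- and in the passage from the closed-set inequality of Lemma \ref{meastheoreticlemma} to a genuine inequality of measures via inner regularity.
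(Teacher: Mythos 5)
Your proof is correct, and it reaches the stated constant by a genuinely different route than the paper. The paper uses Lemma \ref{meastheoreticlemma} together with inner regularity by closed sets only to conclude $|\mu_{x,r}|\ll\mu$; it then takes the Radon--Nikodym derivative $f_{x,r}$ and proves the sharper bound $f_{x,r}\le M$ $\mu$-a.e.\ by a differentiation argument: $A_{r_n}f_{x,r}\to f_{x,r}$ in $L^1(\mu)$ (Theorem \ref{constantE} plus \cite[Theorem 3.1]{Al1} or \cite[Corollary 4.10]{Al2}), passing to an a.e.-convergent subsequence and contradicting the adapted condition at a point of $\{f_{x,r}>M\}$. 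This yields the pointwise domination $|g*\mu_{x,r}|\le M\,|g|*\mathbf{1}_{B(x,r)}$, and the theorem follows by citing Corollary \ref{constantEconv}: the filter case is reduced outright to the indicator case. You never construct the density: you use the weaker domination $|\mu_{x,r}|\le EM\mu$ (straight from the lemma plus inner regularity) exactly once, on the $|g|^p$ factor, obtain the mass factor $|\mu_{x,r}|(B(x,r))\le M\sup_y\mu B(y,r)$ from the adapted condition at $y=x$, $s=r$, and replace Corollary \ref{constantEconv} by Jensen plus a Tonelli interchange via $\mathbf{1}_{B(x,r)}(y)=\mathbf{1}_{B(y,r)}(x)$. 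Your route is more self-contained --- no Radon--Nikodym derivative and no appeal to the differentiation results of \cite{Al1}, \cite{Al2} --- at the price of the careful one-time use of $E$; the paper's route yields stronger intermediate information (the measure domination $|\mu_{x,r}|\le M\mu$ with constant $M$ rather than $EM$, and the pointwise bound by the indicator convolution), which is exactly the reduction advertised in the introduction. Two minor points. First, your Tonelli step needs $\{(x,y):y\in B(x,r)\}$ to be product-measurable; closedness or openness in $X\times X$ suffices once $X$ is separable, which the paper notes may be assumed by restricting to $\operatorname{supp}\mu$, and $\mu$ is $\sigma$-finite since $X=\bigcup_n B(x_0,n)$ with each ball of finite measure. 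Second, your closing remark is slightly off: within your scheme, using the crude bound $|\mu_{x,r}|\le EM\mu$ on both factors degrades the constant only to $E\,M\sup_y\mu B(y,r)$, i.e.\ to $E$ rather than $E^{1+1/p}$; the figure $E^{1+1/p}$ is what results if one uses the crude domination pointwise and then invokes Corollary \ref{constantEconv}.
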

 
 \begin{proof} Fix $x \in X$ and $r > 0$. We shall denote the restriction of $\mu$ to $B(x,r)$ also by $\mu$,
 and for the notation in this part of the argument, $B(x,r)$ will be regarded as the whole space.
 Note that  since we are dealing with the finite Borel measures $\mu$ and $|\mu_{x,r}|$ on the metric space  $B(x,r)$, for every measurable  $S\subset B(x,r)$ we have $\mu (S) = \sup_{F \subset S} \mu (F)$ and
 $|\mu_{x,r}| (S) =  \sup_{F \subset S} |\mu_{x,r}| (F)$, where the suprema are taken over the class of closed sets $F$ (contained in $S$).
By Lemma \ref{meastheoreticlemma},  if $F$ is closed and $\mu (F) = 0$, then  $|\mu_{x,r}| (F) = 0$, so $|\mu_{x,r}| \ll \mu$. Let $f_{x,r}$ be the Radon-Nikodym derivative of $|\mu_{x,r}|$ with respect to $\mu$. Note that $\mu$-a.e. $0\le f_{x,r} \le M$. The first inequality is trivial, since both measures are positive. 
For the second, consider a sequence of radii converging to zero, for instance, $\{r_n\}_{n=1}^\infty = \{1/n\}_{n=1}^\infty$. By Theorem \ref{constantE} together with \cite[Theorem 3.1]{Al1}, or directly by \cite[Corollary 4.10]{Al2},  for  every $f\in L^1(\mu)$,  
 	we have $\lim_{r_n\to 0}  A_{r_n} f  =  f$ in $L^1 (\mu)$. Thus, there is a subsequence $\{A_{r_{n_j}} f \}_{j =1}^\infty$ converging to $f$ almost everywhere.  Now suppose $\mu \{f_{x,r} > M\} > 0$.  Taking $f := f_{x,r}$ above, there is an $N \gg 1$ and a $y \in \{f_{x,r} > M\}$  such that 
 $$
\frac{ |\mu_{x,r}| (B(y,1/N))}{\mu (B(y,1/N))} = \frac{1}{\mu (B(y,1/N))} \int_{B(y,1/N)} f_{x,r} (u) \ d \mu (u) > M,
$$ 
since $f_{x,r} (y) > M$. This contradicts the hypothesis that the filter is adapted to $\mu$
with constant $M$.
But now we have the following pointwise control:
$$
|g * \mu_{x,r}| 
\le  
|g |*|\mu_{x,r}| 
 =	\int_{B (x, r)}  |g| \ d |\mu_{x,r}| 
 $$
 $$
 = \int_{B (x, r)}  |g|(y) f_{x,r} (y)\ d \mu (y) 
 \le
 M \int_{B (x, r)}  |g|  \ d \mu 
 =
 M |g|*\mathbf{1}_{B (x,r)}. 
 $$
Integrating and using Corollary \ref{constantEconv}, for $1\le p < \infty$ we have  
	\begin{equation}\label{opnormconvopfilter}
	\|g * \mu_{\cdot, r}\|_{L^p (\mu)} 
\le
M \|g * \mathbf{1}_{B(\cdot, r)}\|_{L^p (\mu)} 
		\le  
M \sup_{x\in X} \mu B(x,r) E^{1/p}  \|g\|_{L^p (\mu)},	
		\end{equation}
		and $ \|g * \mu_{\cdot, r}\|_{L^\infty (\mu)} 
		\le
M \|g * \mathbf{1}_{B(\cdot, r)}\|_{L^\infty} \le M \sup_{x\in X} \mu B(x,r)   \|g\|_{L^\infty (\mu)}$ for $p = \infty$.
	\end{proof}
  
  \section {Some remainders on the notion of $\tau$-additivity} 
  As requested by an anonymous referee (whose contribution is hereby acknowledged) we add some explanations on the role of $\tau$-additivity for general metric spaces with a Borel measure. While an uncountable union of 
  measurable sets may fail to be measurable,  when the measurable sets are open all
  uncountable unions are open, and hence Borel measurable. The $\tau$-additivity condition is a strengthening
  of countable additivity for open sets, which allows us to approximate arbitrary unions of open sets with finite
  unions and  arbitrarily small errors.
  
  If the metric space is separable, then uncountable unions
  of open sets can be reduced to countable unions, and hence $\tau$-additivity automatically holds for
  all Borel measures $\mu$. In particular, this is the case for Polish spaces.
  Separability is sometimes assumed when defining metric measure spaces (cf. for
  instance \cite[p.  62]{HKST}) but this has the consequence of a priori 
  excluding from the definition spaces that appear naturally in analysis, such as,
  for instance, $L^\infty ([0,1])$.
  While the hypothesis of $\tau$-additivity seems more natural (to this author) in the definition of general metric measure spaces, for the purposes of the present paper it can always be assumed that $X$ is separable, for the reasons indicated above.
  
  Also, if $X$ is not separable
  but $\mu$ is Radon (inner regular with respect to the compact sets) then $\mu$ is $\tau$-additive, since arbitrary open sets can be approximated from below by compact sets. 
  
  Given that $\tau$-additive measures simultaneously generalize Radon measures and Borel measures on separable metric spaces, it is natural to ask whether all Borel measures on metric spaces
  must be $\tau$-additive. It can be assumed as a new axiom that the answer is positive without risking additional inconsistencies, in which case neither separability nor $\tau$-additivity are needed in the definition of metric measure space. More precisely, it is known that if
  ZFC is consistent, we can add as an axiom the non-existence of measurable cardinals and
  obtain a new consistent theory, within which all Borel measures on a metric space are $\tau$-additive,
  cf. \cite[Proposition 7.2.10]{Bo}. So basically, the weakest condition one may want to consider in the definition of metric measure spaces is $\tau$-additivity.

\end{document}